\begin{document}

\begin{frontmatter}   

\titledata{A note on fractional covers of a graph}           
{}                 

\authordatatwo{John Baptist Gauci}{john-baptist.gauci@um.edu.mt}{}
{Jean Paul Zerafa}{zerafa.jp@gmail.com}{The research work disclosed in this publication is funded by the ENDEAVOUR Scholarship Scheme (Malta). The scholarship may be part-financed by the European Union -- European Social Fund (ESF) under Operational Programme II -- Cohesion Policy 2014--2020, ``Investing in human capital to create more opportunities and promote the well being of society".}
{Department of Mathematics, University of Malta, Malta}

\keywords{fractional chromatic number, fractional cover, Kneser graph, $n$-colouring, $a\!\!:\!\!b$-colouring.}
\msc{05C15, 05C72.}

\begin{abstract}
A fractional colouring of a graph $G$ is a function that assigns a non-negative real value to all possible colour-classes of $G$ containing any vertex of $G$, such that the sum of these values is at least one for each vertex. The fractional chromatic number is the minimum sum of the values assigned by a fractional colouring over all possible such colourings of $G$. Introduced by Bosica and Tardif, fractional covers are an extension of fractional colourings whereby the real-valued function acts on all possible subgraphs of $G$ belonging to a given class of graphs. The fractional chromatic number turns out to be a special instance of the fractional cover number. In this work we investigate fractional covers acting on \linebreak$(k+1)$-clique-free subgraphs of $G$ which, although sharing some similarities with fractional covers acting on $k$-colourable subgraphs of $G$, they exhibit some peculiarities. We first show that if a simple graph $G_2$ is a homomorphic image of a simple graph $G_1$, then the fractional cover number defined on the $(k+1)$-clique-free subgraphs of $G_1$ is bounded above by the corresponding number of $G_2$. We make use of this result to obtain bounds for the associated fractional cover number of graphs that are either $n$-colourable or \linebreak$a\!\!:\!\!b$-colourable.
\end{abstract}

\end{frontmatter}   


\section{Introduction and terminology}

Let $G$ be a simple graph (with no loops or multiple edges) having vertex set $V(G)$ and edge set $E(G)$. The \emph{clique number} $\omega(G)$ of $G$ is the cardinality of a largest set of vertices of $G$ which induces a complete subgraph. Recall that if $H$ is an induced subgraph of $G$, then $V(H)\subseteq V(G)$ and $uv \in E(H)$ for any $u,v \in V(H)$ whenever $uv \in E(G)$. On the other extreme, a set of vertices of $V(G)$ no two of which are adjacent is an \emph{independent set}. We let $\mathbb{F}$ denote a class of graphs, closed under isomorphism. The set of induced subgraphs of $G$ belonging to the class of graphs $\mathbb{F}$ is denoted by $\mathbb{F}(G)$. For instance, if $\mathbb{F}$ is the class of all bipartite graphs, then, given a graph $G$, $\mathbb{F}(G)$ is the set containing all possible induced subgraphs of $G$ which are bipartite, possibly even $G$ itself (if $G$ is bipartite). The elements of $\mathbb{F}(G)$ which contain $v$, for some $v \in V(G)$, are denoted by $\mathbb{F}(G,v)$. In particular, we use $\mathbb{I}(G)$ to denote the set of all independent sets of $G$, and $\mathbb{I}(G,v)$ to denote all the independent sets of $G$ that contain $v$, for some $v\in V(G)$.

A (proper) \emph{vertex-colouring} of $G$ is an assignment of colours to the vertices of $G$ such that adjacent vertices receive different colours. If $G$ can be properly coloured using $n$ colours, we say that $G$ is \emph{$n$-colourable}. The \emph{chromatic number} $\chi(G)$ of $G$ is the least number of colours required such that $G$ has a proper colouring. A variant of the chromatic number is the fractional chromatic number, where in fractional colouring the vertices of a graph $G$ are assigned a set of colours instead of a single colour. More formally, the definitions of fractional colouring and fractional chromatic number that we adopt are those used in \cite[pp.135--136]{GodsilRoyle} and presented here in Definition \ref{Definition:FractionalColouring}.

\begin{definition}\label{Definition:FractionalColouring}
A \emph{fractional colouring} of $G$ is a non-negative real-valued function $f$ on $\mathbb{I}(G)$ such that for any vertex $v \in V(G)$
\begin{linenomath}
$$\sum_{I\in \mathbb{I}(G,v)}f(I)\geq 1.$$
\end{linenomath}
The \emph{weight} of a fractional colouring $f$ is equal to $\sum_{I \in \mathbb{I}(G)}f(I)$. The \emph{fractional chromatic number} is given by
\begin{linenomath}
$$\chi_{\textrm{f}}(G)=\min_{f} \sum_{I \in \mathbb{I}(G)}f(I),$$
\end{linenomath}
where the minimum is taken over all possible fractional colourings $f$ of $G$.
\end{definition}

We remark that although the empty set $\emptyset$ is an independent set, $\sum_{I \in \mathbb{I}(G,v)}f(I)$ is not affected by $f(\emptyset)$, as $\emptyset \not\in \mathbb{I}(G,v)$ for all $v \in V(G)$. Thus, $f(\emptyset)$ only affects the weight of $f$ by making it greater, and since the aim is to minimise the weight of $f$, we shall define $f(\emptyset)=0$. Hence, in the sequel, we can either assume that $f(\emptyset)=0$, or, alternatively, consider only non-empty independent sets. Similarly, assigning values to $f(I)$ which are greater than one has only an adverse effect on the fractional chromatic number and we thus limit the range of $f$ to the interval $\left[0,1\right]$.

One interesting problem in graph colouring is the Erd\H{o}s--Faber--Lov\'{a}sz conjecture, first formulated in 1972 \cite{Erdos1981}. This conjecture states that if a graph $G$ is the union of $n$ cliques each of order $n$ with the condition that no two of them share more than one vertex, then $\chi(G)=n$. In January 2021, Kang \emph{et al.} \cite{kuhn} announced that the conjecture is true for sufficiently large values of $n$. The fractional analogue was completely proved by Kahn and Seymour in 1992 \cite{KahnSeymour1992}. We refer the reader to \cite{GodsilRoyle, FractionalBook} for more results on fractional colourings and on the fractional chromatic number.

Motivated by the above, Bosica and Tardif \cite{BosicaThesis, BosicaTardif} defined the fractional $\mathbb{F}\text{-cover}$ of a graph $G$, given here in Definition \ref{definition:fractionalFcover}.

\begin{definition}\cite{BosicaTardif}\label{definition:fractionalFcover}
A \emph{fractional $\mathbb{F}$}-cover of $G$ is a function $f:\mathbb{F}(G)\rightarrow \left[0,1\right]$ such that for any vertex $v \in V(G)$
\begin{linenomath}
$$\sum_{H \in \mathbb{F}(G,v)}f(H) \geq 1.$$
\end{linenomath}
The \emph{weight} of a fractional $\mathbb{F}$-cover $f$ is equal to $\sum_{H \in \mathbb{F}(G)}f(H)$. The \emph{fractional $\mathbb{F}$}-cover \emph{number} of $G$, denoted by $\mathbb{F}$-cover$_{\textrm{f}}(G)$, is the minimum possible weight over all fractional $\mathbb{F}$-covers of $G$, that is,
\begin{linenomath}
$$\mathbb{F}\textrm{-cover}_{\textrm{f}}(G)=\min_{f}\sum_{H \in \mathbb{F}(G)}f(H).$$
\end{linenomath}
\end{definition}

In particular, when $\mathbb{F}(G)$ is the set of all subgraphs of $G$ induced by non-empty independent sets, then $\mathbb{I}$-cover$_{\text{f}}(G)=\chi_{\text{f}}(G)$. We shall have occasion to consider the class $\mathbb{K}_{k}$ composed of graphs not containing a $(k+1)$-clique (referred to as $(k+1)$-clique-free graphs), and the class $\mathbb{C}_{k}$ composed of graphs which are $k$-colourable. As noted in \cite{BosicaTardif}, one can easily see that $\mathbb{I}(G)= \mathbb{K}_{1}(G)=\mathbb{C}_{1}(G)$, implying that $\chi_{\text{f}}(G) = \mathbb{C}_{1}\text{-cover}_{\text{f}}(G) = \mathbb{K}_{1}\text{-cover}_{\text{f}}(G)$. For more general results about the fractional $\mathbb{F}$-cover of a graph $G$, its dual (that is, the fractional $\mathbb{F}$-clique), and how these can be used when dealing with the Erd\H{o}s--Faber--Lov\'{a}sz Conjecture, the reader is referred to \cite{BosicaTardif}.

In the next section we start by presenting some bounds obtained by Bosica and Tardif \cite{BosicaTardif} for fractional $\mathbb{C}_{k}$- and $\mathbb{K}_{k}$-cover numbers. Our main result is given in Lemma \ref{Lemma:Homomorphism}. We show that, given a class of graphs $\mathbb{F}$, if there exists a homomorphism $\theta$ from $G_{1}$ to $G_{2}$ such that the preimage under $\theta$ of each subgraph in $\mathbb{F}(G_{2})$ is also a subgraph of $\mathbb{F}(G_{1})$, then $\mathbb{F}$-cover$_{\rm{f}}(G_{1})\leq \mathbb{F}$-cover$_{\rm{f}}(G_{2})$. As a consequence of this, in  Corollary \ref{Corollary:Homomorphism} we show that if there exists a homomorphism $\theta$ from $G_{1}$ to $G_{2}$, then, $\mathbb{K}_{k}$-cover$_{\text{f}}(G_{1}) \leq \mathbb{K}_{k}$-cover$_{\text{f}}(G_{2})$, for any $k\geq 1$. In Section \ref{SectionColourings}, we find some upper bounds for the fractional $\mathbb{K}_{k}$-cover number of graphs which are $n\text{-colourable}$ and of graphs which are $a\!\!:\!\!b$-colourable. These bounds are discussed in Theorems \ref{Theorem1:n-colourable} and \ref{Theorem2:ab-colourable}, respectively.

\section{Bounds for fractional covers and main result}\label{section example}

Given that the vertex set of a graph $G$ is sufficiently large and since  $\mathbb{C}_{k} \subseteq \mathbb{C}_{k+1}$, from \cite{BosicaTardif} we know that the sequence $\big(\mathbb{C}_{k}\text{-cover}_{\text{f}}(G)\big)_{1 \leq k \leq \chi(G)}$ is non-increasing with \linebreak$\mathbb{C}_{\chi(G)}\text{-cover}_{\text{f}}(G)=1$. Thus
\begin{linenomath}
$$1=\mathbb{C}_{\chi(G)}\text{-cover}_{\text{f}}(G)\leq \ldots \leq \mathbb{C}_{1}\text{-cover}_{\text{f}}(G)=\chi_{\text{f}}(G).$$
\end{linenomath}
For $k> \chi(G)$, $\mathbb{C}_{k}\text{-cover}_{\text{f}}(G)$ is defined to be equal to one. Bosica and Tardif also show that $\big(k\cdot\mathbb{C}_{k}\text{-cover}_{\text{f}}(G)\big)_{1 \leq k \leq \chi(G)}$ is a non-decreasing sequence. Thus
\begin{linenomath}
\begin{equation*}
\begin{split}
\chi_{\text{f}}(G)=1\cdot \mathbb{C}_{1}\text{-cover}_{\text{f}}(G) & \leq 2\cdot \mathbb{C}_{2}\text{-cover}_{\text{f}}(G)\leq \ldots \\
& \leq \chi(G)\cdot \mathbb{C}_{\chi(G)}\text{-cover}_{\text{f}}(G)= \chi(G).
\end{split}
\end{equation*}
\end{linenomath}

For any integer $k$, $\mathbb{C}_k \subseteq \mathbb{K}_k$ (because any $k$-colourable graph does not contain
a clique of order $k+1$), but $\mathbb{K}_{k}\not \subseteq \mathbb{C}_{k}$. For example, the cycle  $C_5$ on five vertices is
triangle free, that is, $C_5$ belongs to $\mathbb{K}_2$ but not to $\mathbb{C}_{2}$. This implies that $\mathbb{K}_k\text{-cover}_{\text{f}}(G) \leq
\mathbb{C}_k\text{-cover}_{\text{f}}(G)$, and for $1 \leq i \leq \omega(G)$ we have \begin{linenomath}
$$\mathbb{K}_{i}\text{-cover}_{\text{f}}(G)\leq \mathbb{C}_{i}\text{-cover}_{\text{f}}(G)\leq \mathbb{C}_{i-1}\text{-cover}_{\text{f}}(G) \leq \ldots \leq \mathbb{C}_{1}\text{-cover}_{\text{f}}(G).$$
\end{linenomath}

Since $\mathbb{K}_{k}\subseteq \mathbb{K}_{k+1}$, $\big(\mathbb{K}_{k}\text{-cover}_{\text{f}}(G)\big)_{1\leq k \leq \omega(G)}$ is also a non-increasing sequence with $\mathbb{K}_{\omega(G)}\text{-cover}_{\text{f}}(G)=1$. For $k> \omega(G)$, $\mathbb{K}_{k}\text{-cover}_{\text{f}}(G)$ is defined to be equal to one. Despite this similarity with $\mathbb{C}_{k}$-covers, the sequence  $\big(k \cdot \mathbb{K}_{k}\text{-cover}_{\text{f}}(G)\big)_{1\leq k \leq\omega(G)}$ is not necessarily non-decreasing. For example, in \cite{BosicaThesis}, it is stated that when $G=C_{5}* C_{5}$, where $*$ denotes the co-normal product between graphs, $\big(k \cdot \mathbb{K}_{k}\text{-cover}_{\text{f}}(G)\big)_{1\leq k \leq 5}=(\frac{25}{4},5,\frac{75}{14},5,5)$.
It is due to this peculiarity that we investigate further the fractional $\mathbb{K}_k$-cover numbers. We are now in a position to prove our main result.

\begin{lemma}\label{Lemma:Homomorphism}
Let $G_1$ and $G_2$ be two simple graphs and let $\mathbb{F}$ be a class of graphs. If $\theta$ is a homomorphism from $G_{1}$ to $G_{2}$ such that the preimage (under $\theta$) of each subgraph in $\mathbb{F}(G_{2})$ is also a subgraph of $\mathbb{F}(G_{1})$, then
\begin{linenomath}
$$\mathbb{F}\emph{-cover}_{\rm{f}}(G_{1})\leq \mathbb{F}\emph{-cover}_{\rm{f}}(G_{2}).$$
\end{linenomath}
\end{lemma}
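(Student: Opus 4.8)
The plan is to take an optimal fractional $\mathbb{F}$-cover of $G_2$ and transport it to $G_1$ through the homomorphism $\theta$, producing a fractional $\mathbb{F}$-cover of $G_1$ whose weight does not exceed that of the original. Since $\theta$ points from $G_1$ to $G_2$ while the asserted inequality on the cover numbers points the other way, the correct move is to pull weight back along $\theta$: each subgraph of $G_2$ will donate its weight to its preimage in $G_1$.

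Concretely, let $g$ be a fractional $\mathbb{F}$-cover of $G_2$ achieving $\mathbb{F}\textrm{-cover}_{\textrm{f}}(G_2)$. For $H \in \mathbb{F}(G_2)$ write $\theta^{-1}(H)$ for the subgraph of $G_1$ induced by $\{u \in V(G_1) : \theta(u) \in V(H)\}$; by the hypothesis, $\theta^{-1}(H) \in \mathbb{F}(G_1)$. I would then define $f \colon \mathbb{F}(G_1) \to [0,\infty)$ by
$$ f(H') = \sum_{\substack{H \in \mathbb{F}(G_2)\\ \theta^{-1}(H) = H'}} g(H), $$
and set $f(H')=0$ when $H'$ is not the preimage of any subgraph in $\mathbb{F}(G_2)$. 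Distinct subgraphs of $G_2$ may well share a preimage (for instance, vertices of $G_2$ outside the image of $\theta$ are invisible to the preimage operation), which is precisely why the fibre sum is needed rather than a plain reindexing.

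Two verifications remain. For the weight, summing $f$ over all of $\mathbb{F}(G_1)$ simply regroups the terms of $\sum_{H \in \mathbb{F}(G_2)} g(H)$ according to their preimage, so the weight of $f$ equals the weight of $g$. For the covering constraint, fix $v \in V(G_1)$ and put $w = \theta(v)$. The key (and essentially the only structural) observation is that if $w \in V(H)$ then $v \in V(\theta^{-1}(H))$, so every $H \in \mathbb{F}(G_2,w)$ contributes its $g$-value to some $H' \in \mathbb{F}(G_1,v)$; since $g$ is nonnegative,
$$ \sum_{H' \in \mathbb{F}(G_1,v)} f(H') \;\geq\; \sum_{H \in \mathbb{F}(G_2,w)} g(H) \;\geq\; 1. $$
Together these give $\mathbb{F}\textrm{-cover}_{\textrm{f}}(G_1) \leq \textrm{weight}(f) = \textrm{weight}(g) = \mathbb{F}\textrm{-cover}_{\textrm{f}}(G_2)$.

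The single point requiring care, and the place I expect the only (mild) obstacle, is that the definition constrains a fractional cover to take values in $[0,1]$, whereas a fibre sum may exceed $1$. I would resolve this by replacing $f$ with $\min\{f,1\}$: capping can only decrease the weight, and it preserves every covering inequality, because for each $v$ either no term at $v$ is capped, leaving that inequality untouched, or some capped term $H' \in \mathbb{F}(G_1,v)$ already contributes the full value $1$. Alternatively, one may first note that enlarging the admissible range to $[0,\infty)$ leaves the cover number unchanged, exactly as already remarked for $\chi_{\textrm{f}}$ in the discussion following Definition \ref{Definition:FractionalColouring}.
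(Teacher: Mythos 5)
Your proposal is correct and follows essentially the same route as the paper: both pull the optimal cover of $G_2$ back along $\theta$ by summing $f_2$ over the fibre of each preimage subgraph, cap the values at $1$ to respect the range $[0,1]$, and verify the covering constraint by the same two-case analysis (either some capped subgraph containing the vertex already contributes $1$, or the fibre sums regroup exactly into $\sum_{H\in\mathbb{F}(G_2,\theta(u))}f_2(H)\geq 1$). The paper simply builds the cap into the definition via $\min\bigl\{1,\sum' f_2(H)\bigr\}$ rather than applying it as a second step.
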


\begin{proof} Let $f_{2}$ be a minimum fractional $\mathbb{F}$-cover of $G_{2}$, that is,
\begin{linenomath}
$$\mathbb{F}\text{-cover} _{\text{f}}(G_{2})=\sum_{H \in \mathbb{F}(G_{2})}f_{2}(H).$$
\end{linenomath}
We define $f_{1}: \mathbb{F}(G_{1}) \rightarrow \left[0,1\right]$ such that, for $J \in \mathbb{F}(G_1)$,

\begin{linenomath}
$$f_{1}(J)= \left\{
\begin{array}{cl}
\min\left\{1,\sum'f_{2}(H)\right\} & $if $ J=H',\\
0 & $otherwise,$
\end{array}\right.$$
\end{linenomath}
where $\sum'$ is the sum taken over all those $H\in\mathbb{F}(G_{2})$ which have $H'$ as a preimage under $\theta$. The well-definition of $f_{1}$ follows immediately. Let $u \in V(G_{1})$. If $u$ is in the vertex set of some $J^* \in \mathbb{F}(G_{1})$ such that $f_{1}(J^*)=1$, then
\begin{linenomath}
$$\sum_{J\in \mathbb{F}(G_{1},u)}f_{1}(J)\geq f_{1}(J^*)=1.$$
\end{linenomath}
Otherwise,
\begin{linenomath}
$$\sum_{J\in \mathbb{F}(G_{1},u)}f_{1}(J)= \sum_{H'\in \mathbb{F}(G_{1},u)}f_{1}(H')=\sum_{H\in \mathbb{F}(G_{2},\theta(u))}f_{2}(H) \geq 1,$$
\end{linenomath}
where the subgraphs $H'\in \mathbb{F}(G_{1},u)$ are those mapped to a non-zero value under $f_{1}$. Such subgraphs $H'$ exist since $f_{2}$ is a fractional $\mathbb{F}$-cover of $G_{2}$ and the preimage (under $\theta$) of each subgraph in $\mathbb{F}(G_{2})$ is also a subgraph of $\mathbb{F}(G_{1})$. Therefore, $f_{1}$ is a fractional $\mathbb{F}$-cover of $G_{1}$ with weight at most that of $f_{2}$, implying
\begin{linenomath}
$$\mathbb{F}\text{-cover}_{\text{f}}(G_{1})\leq \sum_{J \in \mathbb{F}(G_{1})} f_{1}(J)\leq\sum_{H \in \mathbb{F}(G_{2})} f_{2}(H)=\mathbb{F}\text{-cover}_{\text{f}}(G_{2}),$$
\end{linenomath}
proving our lemma.
\end{proof}

\begin{corollary}\label{Corollary:Homomorphism}
Let $G_1$ and $G_2$ be two simple graphs. If $\theta$ is a homomorphism from $G_{1}$ to $G_{2}$, then
\begin{linenomath}
$$\mathbb{K}_{k}\emph{-cover}_{\rm{f}}(G_{1})\leq \mathbb{K}_{k}\emph{-cover}_{\rm{f}}(G_{2}),$$
\end{linenomath}
for any $k\geq 1$.
\end{corollary}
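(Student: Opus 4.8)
The plan is to deduce this corollary directly from Lemma \ref{Lemma:Homomorphism} by specialising the class of graphs to $\mathbb{F}=\mathbb{K}_{k}$. The lemma already gives the inequality $\mathbb{K}_{k}\text{-cover}_{\rm f}(G_{1})\leq \mathbb{K}_{k}\text{-cover}_{\rm f}(G_{2})$ provided its hypothesis holds, so the entire task reduces to verifying that hypothesis for $\mathbb{K}_{k}$: namely, that the preimage under $\theta$ of every subgraph in $\mathbb{K}_{k}(G_{2})$ is itself a subgraph in $\mathbb{K}_{k}(G_{1})$. Once this single compatibility condition is established, no further work is needed and the corollary follows immediately.

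To verify the hypothesis, I would fix an arbitrary $H\in\mathbb{K}_{k}(G_{2})$ and consider its preimage $H'$, the induced subgraph of $G_{1}$ on the vertex set $\theta^{-1}(V(H))=\{u\in V(G_{1}):\theta(u)\in V(H)\}$. The goal is to show that $H'$ is $(k+1)$-clique-free, i.e.\ $H'\in\mathbb{K}_{k}(G_{1})$. I would argue by contradiction: suppose $H'$ contained a clique on $k+1$ vertices $u_{1},\dots,u_{k+1}$. These vertices are pairwise adjacent in $G_{1}$, so since $\theta$ is a homomorphism, each pair $\theta(u_{i})\theta(u_{j})$ with $i\neq j$ is an edge of $G_{2}$. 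Because $G_{2}$ is simple (no loops), an edge forces distinct endpoints, so the images $\theta(u_{1}),\dots,\theta(u_{k+1})$ are pairwise distinct as well as pairwise adjacent. As each $u_{i}\in\theta^{-1}(V(H))$, all these images lie in $V(H)$, and since $H$ is an \emph{induced} subgraph of $G_{2}$, adjacency in $G_{2}$ carries over to adjacency in $H$. Hence $\{\theta(u_{1}),\dots,\theta(u_{k+1})\}$ forms a $(k+1)$-clique in $H$, contradicting $H\in\mathbb{K}_{k}(G_{2})$. Therefore $H'$ is $(k+1)$-clique-free, as required.

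The main obstacle, and the only genuinely non-routine point, is precisely this preimage-of-a-clique argument: one must be careful that a clique in $H'$ really does produce a clique of the \emph{same} size in $H$. Two properties combine to make this work, and both deserve explicit attention — first, that a homomorphism cannot collapse two adjacent vertices onto a single vertex (which is where simplicity of $G_{2}$ is used to rule out loops and thereby guarantee the $k+1$ images remain distinct), and second, that $H$ being an induced subgraph is what lets us promote the edges $\theta(u_{i})\theta(u_{j})$ of $G_{2}$ to edges of $H$ itself. With these two observations in place, the hypothesis of Lemma \ref{Lemma:Homomorphism} is satisfied for $\mathbb{F}=\mathbb{K}_{k}$, and the stated bound holds for every $k\geq 1$.
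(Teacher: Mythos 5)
Your proposal is correct and follows essentially the same route as the paper: both apply Lemma \ref{Lemma:Homomorphism} with $\mathbb{F}=\mathbb{K}_{k}$ and verify its hypothesis by showing that a $(k+1)$-clique in the preimage $H'$ would map, under $\theta$, to $k+1$ distinct pairwise-adjacent vertices of $H$, contradicting $H\in\mathbb{K}_{k}(G_{2})$. The paper phrases the distinctness step via the fibres $U_{i}=\theta^{-1}(v_{i})$ being independent sets (with a superfluous case split on $|V(H)|\leq k$), whereas you derive it directly from simplicity of $G_{2}$ ruling out loops; these are the same observation in different clothing.
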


\begin{proof} Let $\theta$ be a homomorphism from $G_{1}$ to $G_{2}$. By Lemma \ref{Lemma:Homomorphism}, it suffices to show that the preimage (under $\theta$) of each subgraph in $\mathbb{K}_{k}(G_{2})$ is also a subgraph of $\mathbb{K}_k(G_{1})$. Let $V(G_{2})=\{v_{1},\ldots, v_{n}\}$, and if $v_{i}$ is in the range of $\theta$, we define $U_{i}$ such that $U_{i}=\{u \in V(G_{1}): \theta(u)=v_{i}\}$. If $v_{i}$ is not in the range of $\theta$ (that is, $\theta$ is not onto), then we let $U_{i}=\emptyset$. The set $U_{i}$ of vertices of $G_1$ is an independent set for each $i\in \{1,\ldots, n\}$.
Let $H \in \mathbb{K}_{k}(G_{2})$ and suppose $V(H)=\{v_{1},\ldots, v_{m}\}$. Define $H'$ to be the subgraph of $G_{1}$ induced by the vertices in $U_{1}\cup \ldots \cup U_{m}$. We claim that $H' \in \mathbb{K}_{k}(G_{1})$. The result follows immediately when $m\leq k$. For $m>k$, suppose that $H'$ contains a $(k+1)$-clique induced by the vertices $u_{1}, \ldots, u_{k+1}$. Note that each vertex $u_{i}$ is an element of a different $U_{i}$, where $i \in \{1,\ldots, m\}$, and by the definition of $U_{i}$, the vertices $\theta(u_{1}) \ldots, \theta(u_{k+1})$ are all distinct and induce a $(k+1)$-clique in $H$. This contradicts the fact that $H \in \mathbb{K}_{k}(G_{2})$, and so does not contain $(k+1)$-cliques. Thus, $H' \in \mathbb{K}_k(G_1)$.
\end{proof}

\section{Graph colourings and fractional $\mathbb{K}_{k}$-covers}\label{SectionColourings}

In this section we obtain an upper bound for the fractional $\mathbb{K}_{k}$-cover number of graphs either having an $n$-colouring or an $a\!\!:\!\!b$-colouring. We start with $n\text{-colourable}$ graphs.
\begin{theorem}\label{Theorem1:n-colourable}
If $G$ is $n$-colourable, then $\mathbb{K}_{k}$\emph{-cover}$_{\rm{f}}(G)\leq \frac{n}{k}$, for $1 \leq k \leq \omega(G)$.
\end{theorem}

\begin{proof}[Proof 1]
Since $\mathbb{C}_k \subseteq \mathbb{K}_k$, we know that $\mathbb{K}_k\text{-cover}_{\text{f}}(G)\leq \mathbb{C}_k\text{-cover}_{\text{f}}(G)$, implying that $k\cdot\mathbb{K}_k\text{-cover}_{\text{f}}(G)\leq k\cdot\mathbb{C}_k\text{-cover}_{\text{f}}(G)$. Also, since $\big(k\cdot\mathbb{C}_{k}\text{-cover}_{\text{f}}(G)\big)_{1 \leq k \leq \chi(G)}$ is a non-decreasing sequence reaching $\chi(G)$, then $k\cdot\mathbb{K}_k\text{-cover}_{\text{f}}(G)\leq\chi(G)$. Therefore, if $G$ is $n$-colourable, then $\chi(G)\leq n$, and for $1\leq k\leq \omega(G)$, we obtain $\mathbb{K}_{k}\text{-cover}_{\text{f}}(G)\leq \frac{n}{k}.$
\end{proof}

We state a result which was proved in \cite{BosicaThesis} about vertex-transitive graphs.

\begin{theorem}\cite{BosicaThesis}\label{Theorem:VertexTransitive}
If $G$ is a vertex-transitive graph then
\begin{linenomath}
$$\mathbb{K}_{k}\emph{-cover}_{\rm{f}}(G)=\frac{\vert V(G)\vert}{\beta_{k}(G)},$$
\end{linenomath}
where $\beta_{k}(G)$ is the maximum number of vertices in a $(k+1)$-clique-free subgraph of $G$.
\end{theorem}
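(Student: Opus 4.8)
The plan is to prove the two inequalities separately, establishing the lower bound $\mathbb{K}_{k}\text{-cover}_{\text{f}}(G)\geq |V(G)|/\beta_{k}(G)$ for \emph{any} graph, and then the matching upper bound using vertex-transitivity.

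For the lower bound, I would take an arbitrary fractional $\mathbb{K}_{k}$-cover $f$ and sum the covering inequality $\sum_{H\in\mathbb{K}_{k}(G,v)}f(H)\geq 1$ over all $v\in V(G)$. Interchanging the order of summation, the left-hand side becomes $\sum_{H\in\mathbb{K}_{k}(G)}|V(H)|\,f(H)$, since each $H$ is counted once for every vertex it contains. As every $H\in\mathbb{K}_{k}(G)$ satisfies $|V(H)|\leq\beta_{k}(G)$ by the definition of $\beta_{k}(G)$, this yields $\beta_{k}(G)\sum_{H}f(H)\geq|V(G)|$, and hence $\sum_{H}f(H)\geq|V(G)|/\beta_{k}(G)$. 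Taking the minimum over all covers gives the desired lower bound; this step is essentially an averaging (weak-duality) argument and requires no symmetry.

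For the upper bound, I would exploit the automorphism group $\Gamma=\mathrm{Aut}(G)$, which acts transitively on $V(G)$ because $G$ is vertex-transitive. Fix a maximum $(k+1)$-clique-free induced subgraph $H_{0}$, so that $|V(H_{0})|=\beta_{k}(G)$, and consider its orbit $\{H_{1},\ldots,H_{m}\}$ of distinct images under $\Gamma$. Each $H_{i}$ lies in $\mathbb{K}_{k}(G)$, since automorphisms preserve the clique number, and each has exactly $\beta_{k}(G)$ vertices. The key consequence of vertex-transitivity is that every vertex of $G$ is contained in the \emph{same} number $d$ of the $H_{i}$; counting vertex-incidences in two ways then gives $|V(G)|\cdot d=m\cdot\beta_{k}(G)$. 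I would define $f(H_{i})=1/d$ for each $i$ and $f(H)=0$ otherwise. Since $H_{0}$ is non-empty, every vertex lies in at least one image, so $d\geq 1$ and $f$ takes values in $[0,1]$; each vertex is covered with total weight $d\cdot(1/d)=1$, so $f$ is a valid fractional $\mathbb{K}_{k}$-cover. Its weight is $m/d=|V(G)|/\beta_{k}(G)$ by the incidence count, which matches the lower bound and yields the claimed equality.

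The step I expect to be the main obstacle is proving that the incidence number $d$ is independent of the chosen vertex, as this is precisely where vertex-transitivity is used. For any $u,v\in V(G)$, an automorphism carrying $u$ to $v$ permutes the orbit $\{H_{1},\ldots,H_{m}\}$ and maps the images containing $u$ bijectively onto those containing $v$, so the count is uniform. Making this argument precise, and confirming via the orbit-stabiliser relation that passing from the full group $\Gamma$ to the set of \emph{distinct} images does not disturb the uniformity, is the only delicate point; everything else is routine counting.
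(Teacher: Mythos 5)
The paper itself contains no proof of this statement: Theorem \ref{Theorem:VertexTransitive} is quoted from Bosica's thesis \cite{BosicaThesis}, so there is no in-paper argument to compare yours against, and your proposal must be judged on its own merits. It is correct. The lower bound is the standard averaging (weak LP-duality) argument: summing the covering constraints over all vertices and swapping the order of summation gives $\sum_{H}|V(H)|\,f(H)\geq |V(G)|$, and bounding $|V(H)|$ by $\beta_{k}(G)$ yields weight at least $|V(G)|/\beta_{k}(G)$ for \emph{every} graph and every fractional $\mathbb{K}_{k}$-cover, as you note without any symmetry assumption. For the upper bound, spreading weight $1/d$ uniformly over the $m$ distinct images of a maximum $(k+1)$-clique-free induced subgraph under $\mathrm{Aut}(G)$ works exactly as you describe: the set of distinct images is $\mathrm{Aut}(G)$-invariant, so an automorphism carrying $u$ to $v$ restricts to a bijection between the images containing $u$ and those containing $v$, making $d$ independent of the vertex, and the incidence count $|V(G)|\cdot d=m\cdot\beta_{k}(G)$ gives weight exactly $|V(G)|/\beta_{k}(G)$; no orbit-stabiliser bookkeeping beyond this permutation argument is actually needed. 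This is the classical proof that $\chi_{\mathrm{f}}(G)=|V(G)|/\alpha(G)$ for vertex-transitive graphs (the case $k=1$ of this theorem) transplanted to $\mathbb{K}_{k}$-covers, and it is in essence the argument of \cite{BosicaThesis}. One small slip worth fixing: that every vertex lies in at least one image follows from vertex-transitivity (every vertex is the image of a vertex of $H_{0}$ under some automorphism), not merely from $H_{0}$ being non-empty; since you have transitivity in hand, nothing breaks, but the justification as written is incomplete.
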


\begin{remark}\label{Remark:beta1&omega}
Note that $\beta_{1}(G)$ is the cardinality of a largest independent set of $G$, and $\beta_{\omega(G)}=\vert V(G) \vert$.
\end{remark}

Theorem \ref{Theorem:VertexTransitive} can be used together with Corollary \ref{Corollary:Homomorphism} to give an alternative proof to Theorem \ref{Theorem1:n-colourable}.

\begin{proof}[Proof 2]
If $G$ is $n$-colourable, then it is well-known that there exists a homomorphism from $G$ to $K_{n}$, the complete graph on $n$ vertices.
By Theorem \ref{Theorem:VertexTransitive}, $\mathbb{K}_{k}\text{-cover}_{\text{f}}(K_{n})=\frac{n}{k}$, for $1\leq k \leq n$. Therefore, by Corollary \ref{Corollary:Homomorphism}, \begin{linenomath}$$\mathbb{K}_{k}\text{-cover}_{\text{f}}(G)\leq \mathbb{K}_{k}\text{-cover}_{\text{f}}(K_{n})=\frac{n}{k},$$\end{linenomath} for $1\leq k\leq\omega(G)\leq n$.
\end{proof}

We note that although the first proof is elegant, the significance of the second proof is in that it shows that the upper bound is actually sharp. Next we consider graphs having an $a\!\!:\!\!b$-colouring, defined as follows.

\begin{definition}
A graph $G$ has an \emph{$a\!\!:\!\!b$-colouring} if its vertices are assigned a set of $b$ colours taken from a palette of $a$ colours, and adjacent vertices are given disjoint sets of colours.
\end{definition}

As already mentioned, a graph $G$ is $n$-colourable if and only if there is a graph homomorphism from $G$ to $K_n$. Similarly, $G$ has an $a\!\!:\!\!b$-colouring if and only if there is a graph homomorphism from $G$ to the Kneser graph $KG(a,b)$ \cite[p.32]{FractionalBook}, where $KG(a,b)$ is defined as follows.

\begin{definition}\label{definition:kneser}
Let $a,b \in \mathbb{Z}^{+}$ and let $\left[a\right]$ denote the set $\{1,2,\ldots,a\}$. The \emph{Kneser graph} $KG(a,b)$ is the graph on the set of vertices $V\big(KG(a,b)\big)=\binom{[a]}{b}$, with two vertices being adjacent if they are disjoint $b$-subsets.
\end{definition}

In the sequel, we only consider the case when $a\geq2b$ because otherwise $KG(a,b)$ has no edges. In 1955, Kneser \cite{Kneser} conjectured that $\chi\big(KG(a,b)\big)=a-2b+2$, and this was proved in 1978 by Lov\'{a}sz \cite{Lovasz}. Therefore, by Corollary \ref{Corollary:Homomorphism}, $\mathbb{K}_k\text{-cover}_{\text{f}}\big(KG(a,b)\big)\leq \mathbb{K}_k\text{-cover}_{\text{f}}(K_{a-2b+2})=\frac{a-2b+2}{k}$, for ${1\leq k\leq\omega\big(KG(a,b)\big)}$. We can also say that if $G$ is $a\!\!:\!\!b$-colourable, then $\mathbb{K}_k\text{-cover}_{\text{f}}(G)\leq \frac{a-2b+2}{k}$, for $1\leq k \leq \omega(G)$. However, since Kneser graphs are vertex-transitive, we could use Theorem \ref{Theorem:VertexTransitive} to find the value of $\mathbb{K}_k\text{-cover}_{\text{f}}\big(KG(a,b)\big)$. The order of $KG(a,b)$ is $\binom{a}{b}$, so we only need to find the values of $\beta_{k}\big(KG(a,b)\big)$ for $1<k<\omega\big(KG(a,b)\big)$ since, by Remark \ref{Remark:beta1&omega}, $\beta_{1}\big(KG(a,b)\big)=\binom{a-1}{b-1}$ from the Erd\H{o}s--Ko--Rado Theorem and $\beta_{\omega(KG(a,b))}\big(KG(a,b)\big)=\binom{a}{b}$. This also implies that $\mathbb{K}_1\text{-cover}_{\text{f}}\big(KG(a,b)\big)=\frac{a}{b}$ and $\mathbb{K}_{\omega(KG(a,b))}\text{-cover}_{\text{f}}\big(KG(a,b)\big)=1$.

\begin{remark}\label{Remark:Baranyai}
The non-trivial values of $\beta_k\big(KG(a,b)\big)$ are an open problem in extremal combinatorics. First note that if a set of vertices in $KG(a,b)$ induces a clique, then the $b$-subsets of $[a]$ corresponding to these vertices must be pairwise disjoint. The
largest collection of pairwise disjoint $b$-subsets of $[a]$ has size $\lfloor\frac{a}{b}\rfloor$, and so $\omega\big(KG(a,b)\big)=\left\lfloor \frac{a}{b}\right\rfloor$. Moreover, the Kneser graph $KG(a,b)$ contains all Kneser graphs $KG(a',b)$, for $a\geq a'\geq 2b$. In particular, for ${1<k<\omega\big(KG(a,b)\big)}$, the largest $a'$ such that $KG(a',b)$ is a $K_{k+1}$-free subgraph of $KG(a,b)$ is equal to the largest $a'$ such that $\big\lfloor\frac{a'}{b}\big\rfloor=k$, namely $a'=(k+1)b-1$. Therefore, $\beta_k\big(KG(a,b)\big)\geq\binom{(k+1)b-1}{b}$.
\end{remark}

\begin{example}\label{Example:beta}
Consider $KG(6,2)$, with clique number 3. The ten vertices that induce $KG(5,2)$ (that is, the Petersen graph) do not induce a $K_3$. Therefore, $\beta_2\big(KG(6,2)\big)\geq 10$. However, if we consider $KG(10,2)$, despite $KG(5,2)$ being the largest $K_3$-free Kneser graph (with the parameter $b$ equal to 2) which is a subgraph of $KG(10,2)$, \linebreak$\beta_2\big(KG(10,2)\big)\geq 17$. In fact, the set of vertices $\{v\in V\big(KG(10,2)\big):v \cap \left[2\right] \neq \emptyset\}$ of cardinality $17$ does not induce a $K_3$. In general, for $1\leq k\leq \omega\big(KG(a,b)\big)$, the set $\{v\in V\big(KG(a,b)\big): v \cap \left[k\right] \neq \emptyset\}$ of cardinality $\binom{a}{b}-\binom{a-k}{b}$ does not induce a $K_{k+1}$ subgraph of $KG(a,b)$. Later on we shall see that $\beta_2\big(KG(6,2)\big)$ and $\beta_2\big(KG(10,2)\big)$ are actually equal to 10 and 17, respectively.
\end{example}

From Remark \ref{Remark:Baranyai} and an argument similar to that used in Example \ref{Example:beta} the following lemma can be deduced.

\begin{lemma}\label{Lemma:betak}
Let $1<k<\left\lfloor \frac{a}{b}\right\rfloor=\omega\big(KG(a,b)\big)$. Then
\begin{linenomath}
$$\beta_{k}\big(KG(a,b)\big)\geq\max\left\{\binom{(k+1)b-1}{b}, \binom{a}{b}-\binom{a-k}{b}\right\}.$$
\end{linenomath}
\end{lemma}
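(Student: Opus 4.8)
The plan is to establish the lower bound $\beta_k\big(KG(a,b)\big)\geq\max\left\{\binom{(k+1)b-1}{b}, \binom{a}{b}-\binom{a-k}{b}\right\}$ by exhibiting two explicit $K_{k+1}$-free induced subgraphs of $KG(a,b)$, one achieving each quantity in the maximum, since $\beta_k$ is by definition the \emph{largest} number of vertices in such a subgraph and hence dominates the size of any particular construction. Because the two bounds come from genuinely different constructions, I would treat them as two independent claims and then combine.

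First I would handle the term $\binom{(k+1)b-1}{b}$, which is already essentially delivered by Remark~\ref{Remark:Baranyai}. The idea is that $KG(a',b)$ sits inside $KG(a,b)$ as the induced subgraph on those vertices whose $b$-subsets lie entirely within $[a']\subseteq[a]$, for any $2b\leq a'\leq a$. Its clique number is $\lfloor a'/b\rfloor$, so it contains no $K_{k+1}$ precisely when $\lfloor a'/b\rfloor\leq k$; the largest admissible $a'$ is $a'=(k+1)b-1$, and the corresponding vertex count is $\binom{(k+1)b-1}{b}$. I would verify that $(k+1)b-1\geq 2b$ (equivalently $k\geq 1$) so the construction is nonempty and that $(k+1)b-1\leq a$ follows from $k<\lfloor a/b\rfloor$, guaranteeing this is a legitimate induced subgraph of $KG(a,b)$.

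Next I would handle the term $\binom{a}{b}-\binom{a-k}{b}$, generalising the $[2]$-intersecting construction from Example~\ref{Example:beta}. Consider the set $S=\{v\in V(KG(a,b)): v\cap[k]\neq\emptyset\}$, i.e.\ all $b$-subsets meeting the fixed $k$-element set $[k]$. Its cardinality is $\binom{a}{b}$ minus the number of $b$-subsets avoiding $[k]$, namely $\binom{a-k}{b}$. The crucial step is showing $S$ induces no $K_{k+1}$: if $v_1,\dots,v_{k+1}$ were pairwise disjoint $b$-subsets each meeting $[k]$, then each $v_i$ would contain at least one element of $[k]$, and these chosen elements would be distinct by disjointness, forcing $k+1$ distinct elements into the $k$-element set $[k]$ — a contradiction by pigeonhole. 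This is the main obstacle in the sense that it is the one genuinely combinatorial (rather than bookkeeping) point, though the pigeonhole argument makes it short.

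Finally I would combine: since both $S$ and the copy of $KG((k+1)b-1,b)$ are $K_{k+1}$-free induced subgraphs of $KG(a,b)$, and $\beta_k\big(KG(a,b)\big)$ is defined as the maximum order over all such subgraphs, it exceeds each of their orders and therefore their maximum, yielding the stated inequality. I expect no serious difficulty beyond ensuring the range hypothesis $1<k<\lfloor a/b\rfloor$ is used correctly to keep both constructions valid and nontrivial.
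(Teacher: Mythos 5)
Your proposal is correct and follows essentially the same route as the paper, which deduces the lemma by combining the embedded copy of $KG\big((k+1)b-1,b\big)$ from Remark \ref{Remark:Baranyai} with the intersecting-family construction $\{v : v\cap[k]\neq\emptyset\}$ from Example \ref{Example:beta}. Your write-up in fact supplies slightly more detail than the paper (the pigeonhole verification of $K_{k+1}$-freeness and the range checks on $a'$), all of which is accurate.
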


Equality does not necessarily follow as this is still an unsolved problem, known as the Erd\H{o}s Matching Conjecture, which was made in 1965 by Erd\H{o}s \cite{Erdos1965} and is stated hereunder.

\begin{conjecture}\cite{Erdos1965}\label{Conjecture:MatchingErdos}
Let $\mathcal{F}$ be a family of $b$-subsets of $\left[a\right]$, containing no $k+1$ pairwise disjoint members. Then
\begin{linenomath}
$$\vert \mathcal{F}\vert \leq \max\left\{\binom{(k+1)b-1}{b}, \binom{a}{b}-\binom{a-k}{b}\right\},$$
\end{linenomath}
for $a\geq(k+1)b-1$.
\end{conjecture}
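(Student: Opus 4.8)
The statement is the \emph{Erd\H{o}s Matching Conjecture}, which is open in full generality; what follows is the line of attack I would take, together with an honest account of where it stalls. Write $\nu(\mathcal{F})$ for the \emph{matching number} of a family, i.e.\ the largest number of pairwise disjoint members, so the hypothesis reads $\nu(\mathcal{F})\le k$. The two families realising the right-hand side are the \emph{clique family} $\binom{[(k+1)b-1]}{b}$, of size $\binom{(k+1)b-1}{b}$ and matching number $k$, and the \emph{cover family} $\{F\in\binom{[a]}{b}:F\cap[k]\neq\emptyset\}$, of size $\binom{a}{b}-\binom{a-k}{b}$ and matching number $k$ (cf.\ Example~\ref{Example:beta}); the aim is to show that no family with $\nu\le k$ beats the larger of these two.

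First I would \emph{normalise by shifting}. For $i<j$, the compression $S_{ij}$ replaces each $F\in\mathcal{F}$ by $(F\setminus\{j\})\cup\{i\}$ whenever the latter is not already present; iterating over all such pairs terminates at a left-compressed family $\mathcal{F}'$ with $|\mathcal{F}'|=|\mathcal{F}|$, and a standard check shows that shifting never increases the matching number, so $\nu(\mathcal{F}')\le k$. It therefore suffices to bound shifted families, where matchings are easy to locate: a shifted family has a matching of size $s$ if and only if it contains the $s$ canonical blocks $\{1,\ldots,b\},\ldots,\{(s-1)b+1,\ldots,sb\}$, so $\nu$ can be read off an initial segment of $[a]$.

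The plan is then to \emph{induct on $a$} by peeling off the top element. Setting $\mathcal{F}_0=\{F\in\mathcal{F}':a\notin F\}\subseteq\binom{[a-1]}{b}$ and $\mathcal{F}_1=\{F\setminus\{a\}:a\in F\in\mathcal{F}'\}\subseteq\binom{[a-1]}{b-1}$, we get $|\mathcal{F}'|=|\mathcal{F}_0|+|\mathcal{F}_1|$, and since any matching of $\mathcal{F}'$ uses at most one set through $a$, certainly $\nu(\mathcal{F}_0)\le k$. The driving dichotomy is: either $\nu(\mathcal{F}_0)\le k-1$, so the inductive hypothesis applied to $\mathcal{F}_0$ (with parameters $a-1,b,k-1$) plus a trivial bound on $\mathcal{F}_1$ suffices; or $\nu(\mathcal{F}_0)=k$, in which case fixing a maximum matching $M$ in $\mathcal{F}_0$ forces every member of $\mathcal{F}_1$ to meet the $kb$-element set $\bigcup M$, for otherwise $M$ would extend to a matching of size $k+1$ in $\mathcal{F}'$. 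Each branch is designed to feed back a bound matching one of the two extremal constructions above.

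The hard part will be \emph{closing the induction uniformly across the crossover}. The two candidate maxima exchange dominance near $a\approx(2k+1)b$, and in this intermediate range neither branch is tight: the trivial estimate on $\mathcal{F}_1$ in the first branch and the covering constraint in the second each lose a little, so the summed bound overshoots $\max\{\cdots\}$. This slack is exactly what confines the known unconditional proofs to the asymptotic regime $a\ge(2k+1)b-k$ (due to Frankl) and to the boundary cases $k=1$ (the Erd\H{o}s--Ko--Rado theorem) and $k=\omega\big(KG(a,b)\big)$. Producing a structural argument sharp enough for the middle range is precisely what the conjecture still lacks, and I would not expect the shifting-and-induction scheme on its own to supply it.
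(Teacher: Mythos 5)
This statement is not one the paper proves, nor could it: it is the Erd\H{o}s Matching Conjecture itself, quoted from \cite{Erdos1965}, and the paper presents it precisely as an unsolved problem --- immediately before the statement the authors note that equality in Lemma \ref{Lemma:betak} ``is still an unsolved problem'', and immediately after it they record the known special cases ($b=2$ by Erd\H{o}s and Gallai \cite{ErdosGallai}, $b=3$ by Frankl \cite{Frankl2017}, and the bounds of \cite{Frankl2013,franklarxiv}). Your refusal to claim a proof is therefore the correct call, and there is no proof in the paper to compare your attempt against. Your supporting material is also essentially accurate: the two extremal families you describe are the right ones (the second is exactly the construction in Example \ref{Example:beta}), compressions do preserve size and do not increase the matching number, and the regimes you list as settled ($k=1$, i.e.\ Erd\H{o}s--Ko--Rado, and Frankl's range $a\geq(2k+1)b-k$) complement the cases the paper cites.

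One technical slip is worth flagging, since your proposed induction pivots on it: it is false that a shifted family has a matching of size $s$ if and only if it contains the consecutive blocks $\{1,\ldots,b\},\ldots,\{(s-1)b+1,\ldots,sb\}$. The correct canonical matching for shifted families is the interleaved one: if $\mathcal{F}$ is shifted and $\nu(\mathcal{F})\geq s$, then the sets $E_i=\{i,\,s+i,\,2s+i,\,\ldots,\,(b-1)s+i\}$ for $i\in[s]$ all belong to $\mathcal{F}$. The block version already fails for $a=4$, $b=2$, $s=2$: the shifted family $\{\{1,2\},\{1,3\},\{1,4\},\{2,3\},\{2,4\}\}$ has matching number two (take $\{1,3\}$ and $\{2,4\}$) yet does not contain $\{3,4\}$. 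Rerunning your peeling argument with the interleaved canonical matching is possible, but it does not remove the loss you yourself identify near the crossover of the two extremal bounds; that loss is exactly why the statement remains a conjecture, so your overall assessment stands.
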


Since $KG(a,1)$ is the complete graph on $a$ vertices and $\beta_k(K_a)=k$ for $1\leq k\leq a$, all the possible values of $\beta_k\big(KG(a,1)\big)$ are known. In 1959, Erd\H{o}s and Gallai \cite{ErdosGallai} proved that Conjecture \ref{Conjecture:MatchingErdos} is true when $b = 2$, which justifies equality in the cases considered in Example \ref{Example:beta}. In 2016, Frankl \cite{Frankl2017} proved that the conjecture is true when $b=3$. For more recent results and bounds regarding this conjecture we suggest \cite{Frankl2013} and \cite{franklarxiv}. We can now present the upper bound for the fractional $\mathbb{K}_k$-cover number of $a\!\!:\!\!b$-colourable graphs.

\begin{theorem}\label{Theorem2:ab-colourable}
If $G$ is $a\!\!:\!\!b$-colourable, then,

\begin{enumerate}[(1)]
\item $\mathbb{K}_{1}$\emph{-cover}$_{\rm{f}}(G)\leq \frac{a}{b}$,
\item $\mathbb{K}_{k}$\emph{-cover}$_{\rm{f}}(G)\leq \frac{\binom{a}{b}}{\max\left\{\binom{(k+1)b-1}{b}, \binom{a}{b}-\binom{a-k}{b}\right\}}$, for $1 < k < \omega(G)$, and
\item $\mathbb{K}_{\omega(G)}$\emph{-cover}$_{\rm{f}}(G)=1$.
\end{enumerate}
\end{theorem}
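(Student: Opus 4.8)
The plan is to reduce everything to the Kneser graph $KG(a,b)$ and invoke the machinery already assembled. Since $G$ is $a\!\!:\!\!b$-colourable, there is a graph homomorphism $\theta$ from $G$ to $KG(a,b)$, so Corollary \ref{Corollary:Homomorphism} immediately yields
$$\mathbb{K}_{k}\text{-cover}_{\text{f}}(G)\leq \mathbb{K}_{k}\text{-cover}_{\text{f}}\big(KG(a,b)\big)$$
for every $k\geq 1$. Because $KG(a,b)$ is vertex-transitive and has $\binom{a}{b}$ vertices, Theorem \ref{Theorem:VertexTransitive} lets me rewrite the right-hand side as $\binom{a}{b}/\beta_{k}\big(KG(a,b)\big)$ for $1\leq k\leq\omega\big(KG(a,b)\big)$. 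Thus each of the three parts follows by plugging in the appropriate value of (or lower bound on) $\beta_{k}\big(KG(a,b)\big)$ and observing that a larger $\beta_{k}$ produces a smaller quotient.

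For part (1) I would take $k=1$. By the Erd\H{o}s--Ko--Rado theorem, $\beta_{1}\big(KG(a,b)\big)=\binom{a-1}{b-1}$ (see Remark \ref{Remark:beta1&omega} and the surrounding discussion), so the quotient equals $\binom{a}{b}/\binom{a-1}{b-1}$, which simplifies to $a/b$ after cancelling factorials. For part (2), in the range $1<k<\omega(G)$, I would substitute the lower bound from Lemma \ref{Lemma:betak}: since $\beta_{k}\big(KG(a,b)\big)\geq\max\left\{\binom{(k+1)b-1}{b},\binom{a}{b}-\binom{a-k}{b}\right\}$, the quotient $\binom{a}{b}/\beta_{k}\big(KG(a,b)\big)$ is at most $\binom{a}{b}/\max\left\{\binom{(k+1)b-1}{b},\binom{a}{b}-\binom{a-k}{b}\right\}$, giving exactly the stated bound.

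Part (3) does not even require the Kneser graph: $G$ contains no clique larger than $\omega(G)$, so $G$ itself is a $(\omega(G)+1)$-clique-free subgraph of $G$, that is, $G\in\mathbb{K}_{\omega(G)}(G)$. Assigning weight $1$ to $G$ and $0$ to every other subgraph gives a fractional $\mathbb{K}_{\omega(G)}$-cover of weight $1$, while any fractional cover has weight at least $1$, since the covering inequality $\sum_{H\in\mathbb{K}_{\omega(G)}(G,v)}f(H)\geq 1$ at any single vertex $v$ is a partial sum of the total weight. Hence $\mathbb{K}_{\omega(G)}\text{-cover}_{\text{f}}(G)=1$, as was in fact already recorded before the statement.

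The only genuine subtlety --- and the step I would check most carefully --- is that the ranges of $k$ match up. A homomorphism maps a clique to a clique of the same order, so $\omega(G)\leq\omega\big(KG(a,b)\big)=\lfloor a/b\rfloor$; consequently the condition $1<k<\omega(G)$ in part (2) guarantees $1<k<\omega\big(KG(a,b)\big)$, which is precisely the hypothesis needed to apply both Theorem \ref{Theorem:VertexTransitive} and Lemma \ref{Lemma:betak} to $KG(a,b)$. Once this bookkeeping is settled, the three bounds fall out of the chain ``homomorphism $\Rightarrow$ Corollary \ref{Corollary:Homomorphism} $\Rightarrow$ vertex-transitivity'' with essentially no further computation beyond the factorial cancellation in part (1).
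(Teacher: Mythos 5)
Your proposal is correct and follows essentially the same route as the paper: reduce to $KG(a,b)$ via the homomorphism and Corollary \ref{Corollary:Homomorphism}, evaluate $\mathbb{K}_{k}\text{-cover}_{\text{f}}\big(KG(a,b)\big)$ by Theorem \ref{Theorem:VertexTransitive}, bound $\beta_k$ via Erd\H{o}s--Ko--Rado for $k=1$ and Lemma \ref{Lemma:betak} for $1<k<\omega(G)$, and check that $\omega(G)\leq\omega\big(KG(a,b)\big)$ so the ranges of $k$ align (the paper makes exactly this observation before its proof of part (2)). Your explicit weight-one cover for part (3) is just a spelled-out version of what the paper records earlier and dismisses as immediate.
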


\begin{proof}
Since $G$ is $a\!\!:\!\!b$-colourable, there exists a homomorphism from $G$ to $KG(a,b)$. Therefore, by Corollary \ref{Corollary:Homomorphism}, $\mathbb{K}_{1}\text{-cover}_{\text{f}}(G)\leq \mathbb{K}_{1}\text{-cover}_{\text{f}}\left(KG(a,b)\right)=\frac{a}{b}$, proving $(1)$.
Before proving $(2)$ we note that for any two graphs $G_{1}$ and $G_{2}$, if there exists a homomorphism from $G_{1}$ to $G_{2}$, then $\omega(G_{1})\leq \omega(G_{2})$. Hence,
\begin{linenomath}
$$\omega(G)\leq \omega\big(KG(a,b)\big)=\left\lfloor \frac{a}{b}\right\rfloor.$$
\end{linenomath}
By Corollary \ref{Corollary:Homomorphism}, Theorem \ref{Theorem:VertexTransitive} and Lemma \ref{Lemma:betak}
\begin{linenomath}
$$\mathbb{K}_{k}\text{-cover}_{\text{f}}(G)\leq \mathbb{K}_{k}\text{-cover}_{\text{f}}\big(KG(a,b)\big)\leq\frac{\binom{a}{b}}{\max\left\{\binom{(k+1)b-1}{b}, \binom{a}{b}-\binom{a-k}{b}\right\}},$$
\end{linenomath}
for $1<k<\omega(G)$, proving $(2)$.
Statement $(3)$ follows immediately.
\end{proof}

\begin{corollary}
For any simple and $a\!\!:\!\!b$-colourable graph $G$, $\omega(G)\leq \left\lfloor \frac{a}{b} \right\rfloor$.
\end{corollary}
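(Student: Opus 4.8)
The plan is to chain together three facts already in hand, making this an immediate consequence of earlier material. First I would invoke the equivalence, stated just before Definition \ref{definition:kneser}, that $G$ is $a\!\!:\!\!b$-colourable precisely when there is a graph homomorphism $\theta$ from $G$ to the Kneser graph $KG(a,b)$. This converts the hypothesis into the existence of such a $\theta$.

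Next I would use the elementary clique-monotonicity of homomorphisms, recorded inside the proof of Theorem \ref{Theorem2:ab-colourable}: if $\theta$ is a homomorphism from $G_{1}$ to $G_{2}$ and $\{w_{1},\dots,w_{t}\}$ induces a $K_{t}$ in $G_{1}$, then for any $i\neq j$ the edge $w_{i}w_{j}$ forces $\theta(w_{i})\theta(w_{j})\in E(G_{2})$, so the images $\theta(w_{1}),\dots,\theta(w_{t})$ are pairwise distinct (there are no loops) and pairwise adjacent, hence induce a $K_{t}$ in $G_{2}$. Applying this to a maximum clique of $G_{1}$ gives $\omega(G_{1})\leq\omega(G_{2})$; taking $G_{1}=G$ and $G_{2}=KG(a,b)$ yields $\omega(G)\leq\omega\big(KG(a,b)\big)$.

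Finally I would substitute the value $\omega\big(KG(a,b)\big)=\left\lfloor \frac{a}{b}\right\rfloor$ established in Remark \ref{Remark:Baranyai} (the largest collection of pairwise disjoint $b$-subsets of $[a]$ has size $\lfloor a/b\rfloor$), which immediately gives $\omega(G)\leq\left\lfloor \frac{a}{b}\right\rfloor$.

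There is essentially no obstacle here: the statement follows from clique-monotonicity together with the already-computed clique number of the Kneser graph, and indeed the identical chain of inequalities appears verbatim in the proof of Theorem \ref{Theorem2:ab-colourable}. The only step deserving a line of justification is the clique-monotonicity claim, and even that is routine. Alternatively, one could bypass homomorphisms entirely and argue directly from the colouring: a clique $K_{t}$ in $G$ requires $t$ pairwise colour-disjoint vertices, hence $t$ pairwise disjoint $b$-subsets of $[a]$, of which at most $\lfloor a/b\rfloor$ can exist.
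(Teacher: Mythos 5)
Your proof is correct and is essentially the paper's own: the corollary is read off directly from the chain $\omega(G)\leq\omega\big(KG(a,b)\big)=\left\lfloor \frac{a}{b}\right\rfloor$ established inside the proof of Theorem \ref{Theorem2:ab-colourable} (homomorphism to the Kneser graph, clique-monotonicity of homomorphisms, and the clique number computed in Remark \ref{Remark:Baranyai}), exactly as you lay it out. The direct counting argument you sketch at the end (a $K_t$ in $G$ forces $t$ pairwise disjoint $b$-subsets of $[a]$) is a fine, even more elementary, alternative, but your main route coincides with the paper's.
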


Note that Theorem \ref{Theorem1:n-colourable} is implied by Theorem \ref{Theorem2:ab-colourable} by taking $a=n$ and $b=1$.

\section{Problems}
The problems we propose relate to the sequence $\big(k\cdot\mathbb{K}_{k}$-cover$_{\text{f}}(G)\big)_{1 \leq k \leq \omega(G)}$. As mentioned in Section \ref{section example}, there are instances when this sequence is not monotonic and so it would be interesting to see which graphs yield a monotonic sequence.

\begin{problem}\label{ProblemMonotonic}
Which are the classes of graphs $\mathbb{F}$ such that for every $G\in \mathbb{F}$, the sequence  $\left(k\cdot\mathbb{K}_{k}\textrm{-cover}_\textrm{f}(G)\right)_{1 \leq k \leq \omega(G)}$ is monotonic?
\end{problem}

Empirical evidence seems to suggest that a complete answer to Problem \ref{ProblemMonotonic} is still elusive. A more manageable problem might be the following.

\begin{problem}\label{ProblemConstant}
Which are the classes of graphs $\mathbb{F}$ such that for every $G\in \mathbb{F}$, the sequence
$\big(k\cdot\mathbb{K}_{k}\textrm{-cover}_\textrm{f}(G)\big)_{1 \leq k \leq \omega(G)}$ is constant?
\end{problem}

One trivial class of graphs that answers Problem \ref{ProblemConstant} is the class of complete graphs $K_{n}$. For these graphs, $\big(k\cdot\mathbb{K}_{k}$-cover$_{\text{f}}(K_{n})\big)_{1 \leq k \leq \omega(K_{n})}$ is always equal to $n$. Bipartite graphs form another class that answers Problem \ref{ProblemConstant}. In general, if $\big(k\cdot\mathbb{K}_{k}$-cover$_{\text{f}}(G)\big)_{1 \leq k \leq \omega(G)}$ is constant, we must surely have that
\begin{linenomath}$$\chi_{\text{f}}(G)=1\cdot\mathbb{K}_{1}\text{-cover}_{\text{f}}(G)=\omega(G)\cdot\mathbb{K}_{\omega(G)}\text{-cover}_{\text{f}}(G)=\omega(G).$$
\end{linenomath}
Thus, perfect graphs could be a plausible class of graphs which answers Problem \ref{ProblemConstant}, since a graph $G$ is called \emph{perfect} if for any induced subgraph $H$ of $G$, $\omega(H)=\chi(H)$. For any graph $G$, $\omega(G)\leq \chi_{\text{f}}(G)\leq \chi(G)$ \cite[p.145]{GodsilRoyle}, and so if $G$ is perfect, then $\chi_{\text{f}}(G)=\omega(G)$. Complete graphs and bipartite graphs are perfect graphs, but are all perfect graphs a part of the solution of Problem \ref{ProblemConstant}?

\end{document}